\newif\ifdraft\draftfalse
\newif\ifcite\citefalse
\newif\ifblow\blowtrue
\ifcite\usepackage{showkeys}\else\usepackage[notcite,notref]{showkeys}\fi\fi
\newtheorem{proposition}[equation]{Proposition}
\newtheorem{theorem}[equation]{Theorem}
\newtheorem{lemma}[equation]{Lemma}
\theoremstyle{remark}
\theoremstyle{definition}
\theoremstyle{remark}
\newtheorem{remark}[equation]{Remark}
\numberwithin{equation}{section}
\def\bc{\begin{cases}}
\def\ec{\end{cases}}
\def\a{\alpha}
\def\Bbb{\mathbb}
\def\cp{{\mathcal P}}
\def\cu{{\mathcal U}}
\def\cw{{\mathcal W}}
\def\bc{{\mathbb C}}
\def\er{\eqref}
\def\bc{\mathbb C}
\def\lp2{L_pH_{2p}}
\def\bean{\begin{eqnarray}}
\def\eean{\end{eqnarray}}
\def\bea{\begin{eqnarray*}}
\def\eea{\end{eqnarray*}}
\def\beq{\begin{equation}}
\def\eeq{\end{equation}}
\def\beq*{\begin{equation*}}
\def\eeq*{\end{equation*}}
\def\bal{\begin{align*}}
\def\eal{\end{align*}}
\def\baln{\begin{align}}
\def\ealn{\end{align}}
\def\beg{\begin{gather*}}
\def\eng{\end{gather*}}
\def\bqu{\begin{question}}
\def\equ{\end{question}}
\def\im{{\rm im}}
\def\im{\imath}
\def\nm{\nonumber}
\def\ban{\begin{proof}[Answer]}
\def\ean{\end{proof}}
\def\ra{\Rightarrow}
\def\p{\partial}
\def\bqu{\begin{question}}
\def\equ{\end{question}}
\def\0110{\begin{matrix} 0 & 1\\1&0\end{matrix}}
\def\fb{\mathfrak{b}}
\def\fg{\mathfrak{g}}
\def\fh{\mathfrak{h}}
\def\fl{\mathfrak{l}}
\def\fn{\mathfrak{n}}
\def\fo{\mathfrak{o}}
\def\fs{\mathfrak{s}}
\def\ban{\begin{proof}[Answer]}
\def\ean{\end{proof}}
\def\ben{\begin{equation}}
\def\een{\end{equation}}
\def\la{\langle}
\def\ra{\rangle}
\def\j1{{(j+1)}}
\def\e{\epsilon}
\def\ad{\text{ad}}
\def\im{\text{im}}
\begin{document}

\title[Intrinsic construction of invariant functions]{Intrinsic construction of invariant functions on simple Lie algebras}

\author{Zhaohu Nie}
\email{zhaohu.nie@usu.edu}
\address{Department of Mathematics and Statistics, Utah State University, Logan, UT 84322-3900}

\date{\today}

\subjclass[2000]{17B01, 13A50}

\begin{abstract}
An algorithm for constructing primitive adjoint-invariant functions on a complex simple Lie algebra is presented. The construction is intrinsic in the sense that it does not resort to any representation. A primitive invariant function on the whole Lie algebra is obtained by lifting a coordinate function on a Kostant slice of the Lie algebra. Such an intrinsic construction of invariant functions is most useful for the bigger exceptional Lie algebras such as the $E$'s.
The Maple implementation of this algorithm is outlined at the end and will be applied to these exceptional Lie algebras in a future work. 
\end{abstract}

\maketitle

\section{Introduction}

Let $\fg$ be a complex simple Lie algebra of rank $l$ with adjoint group $G$. 
We recall that $G$ acts on $\fg$ by the adjoint action, and therefore on the algebra ${\mathcal P}(\fg)$ 
of polynomials on $\fg$ by its contragredient, that is, 
\begin{equation}\label{def G action}
(g\cdot P)(x) = P(\text{Ad}_{g^{-1}} x),\quad g\in G,\ P\in {\mathcal P}(\fg),\ x\in \fg.
\end{equation}
Let 
$$I(\fg)={\mathcal P}(\fg)^G$$
 be the algebra of polynomials on
$\fg$ invariant under the above action of $G$.  A 
well-known theorem of Chevalley \cite{C} asserts that $I(\fg)$ is polynomial algebra on $l$ 
homogeneous polynomials 
$I_1 ,\cdots,I_l$, that is, 
\begin{equation*}
I(\fg) = {\Bbb C}[I_1,\cdots,I_l].
\end{equation*}
We will refer to the $I_j$'s as
\emph{primitive invariant functions} on $\fg$.
Write the \emph{degrees}
\begin{equation}\label{degs}
\deg I_{j} = d_{j},\quad j=1,\cdots, l.
\end{equation}
We will assume that the $I_{j}$'s are ordered in the sense
that
\begin{equation*}
d_1\leq d_2\leq\cdots\leq d_l.
\end{equation*}
The numbers 
\begin{equation}\label{expo}
m_j= d_j -1,\quad j=1,\cdots, l,
\end{equation}
are called the \emph{exponents} of $\fg$. 

Although the choice of the $I_j$'s is not unique, the degrees $d_j$ and hence the exponents $m_j$ are intrinsic to $\fg$ which constitute important invariants (see \cite{ChB}). Our main objective in this paper is to give an algorithm to explicitly and  intrinsically construct a set of primitive invariant functions.
We note that our invariant functions are defined on the whole Lie algebra $\fg$. We also comment that our construction is uniform, explains the pattern for the exponents, and 
does not resort to any representation. The author has implemented his algorithm on Maple. 

The traditional way of obtaining such invariant functions is extrinsic by employing a faithful representation $\rho: \fg\to \fg\fl(V)$ of $\fg$ on a vector space $V$. Usually the first fundamental representation of the Lie algebra is used because of its small dimension. 
For $x\in \fg$, since the characteristic polynomial of $\rho(x)$ is invariant under the conjugation by $GL(V)$, its certain coefficients are the sought-after primitive invariant functions of $x$. These certain coefficients are the sums of the principal minors of the matrix $\rho(x) $ with dimensions equal to the degrees $d_j$ \er{degs} of the Lie algebra. To this author, there are several drawbacks to this approach. First, this construction uses the \emph{a prior} information of the degrees $d_j$ of $\fg$ without being able to provide any deeper reason, and in the case of $D_{2n}=\fs\fo(4n)$ where the degree $2n$ has multiplicity 2, a special formula is needed for the Pfaffian. 
Furthermore for bigger exceptional Lie algebras, their representations are hard to be made explicit, and the enormous cardinality of the principal minors of a big matrix prevents this method from being efficient. 
In particular, explicit forms of invariant functions on $E_8$ are only known up to the second one of degree 8 \cite{8E8}.   

In view of the above, an intrinsic and uniform method is clearly desirable. Intuitively speaking, our algorithm uses the restriction of the adjoint representation on a principal $\fs\fl_2$ subalgebra in $\fg$, and we gain independence from other representations and furthermore computational efficiency  in this way. In particular, our algorithm is very effective in obtaining the interesting restrictions of the invariant functions on the shifted Borel subalgebras, that is, carrying out step (i) in Theorem \ref{main}. 
 
The foundation for our construction is Kostant's profound studies \cites{K1,K2,K3} on invariant functions, which we now introduce.  Fix a Cartan subalgebra $\fh\subset \fg$. Let $\Delta$ be the corresponding root system, $\Delta_{\pm}$ a choice of positive/negative roots, and 
$\pi =\{\alpha_1,\cdots,\alpha_l\}$ the positive simple roots. Let $\fg = \fh \oplus \bigoplus_{\alpha\in \Delta} \fg_{\alpha}$
be the root space decomposition,
with $\fg_\a$ generated by a root vector $e_\a$. 
For $\a\in \Delta_+$, let $H_{\a}=[e_{\a},e_{-\a}]$, and we require $\a(H_{\a})=2$ for the choices of root vectors. For $1\leq i\leq l$, the $H_{\a_i}$ 
form a basis of $\fh$. 

The height (or the order) $o(\a)$ of a root $\alpha\in \Delta$ is defined as
\begin{equation}\label{def ht}
o(\alpha) = \sum_{i=1}^l n_i,\quad \text{if }\a=\sum_{i=1}^l n_i \alpha_{i}.
\end{equation}
This also induces a height gradation 
\begin{equation}\label{height}
\fg\cong\bigoplus_k \fg_k,\quad \fg_k=\bigoplus_{o(\a)=k} \fg_\a\text{ and } \fg_0=\fh.
\end{equation}
For $x\in \fg_k$, we write $o(x)=k$ by abusing the notation and call $k$ the height of $x$. 
Let $\fn=\bigoplus_{\a\in \Delta_+} \fg_\a=\bigoplus_{k>0}\fg_k$ be the maximal nilpotent subalgebra of $\fg$, $\fb=\fh\oplus \fn$ the Borel subalgebra, and $N$ the unipotent subgroup of $G$ corresponding to $\fn$.

Define 
\begin{equation}\label{def e0}
{\epsilon = \sum_{i=1}^l e_{-\alpha_{i}}}.
\end{equation}
Let $\fs$ be a complement of $[\e,\fg]$ in $\fg$, that is, 
\begin{equation}\label{split}
\fg\cong \fs\oplus [\e,\fg].
\end{equation}
Then by \cites{K1,K2}, $\fs\subset \fn$, and $\dim(\fs)=l$ is equal to the rank. We call $\fs$ a Kostant slice, and let $\{s_j\}_{j=1}^l$ be a homogeneous basis of $\fs$ with respect to the height gradation \er{height}. 

The following theorem summarizes several important results of Kostant on invariant functions. 
\begin{theorem}[Kostant \cites{K1,K2,K3}]\label{Kos iso} 
The heights of the $s_j$
are correspondingly the exponents of the Lie algebra $\fg$. That is, if we order the $\{s_j\}$ so that $o(s_1)\leq o(s_2)\leq \cdots \leq o(s_l)$, then 
\begin{equation}\label{exp pops}
o (s_j)=m_j, \quad 1\leq j\leq l. 
\end{equation}

Furthermore, there is a sequence of isomorphisms through restrictions
\begin{equation}\label{restriction}
I(\fg)\xrightarrow[r_1]{\cong} \cp(\e+\fb)^N\xrightarrow[r_2]{\cong} \cp(\e+\fs),
\end{equation}
where $\cp(\e+\fb)^N$ is the algebra of polynomials on $\e+\fb$ invariant under the $N$ action, and $\cp(\e+\fs)$ is the algebra of all polynomials on $\e+\fs$. 
\end{theorem}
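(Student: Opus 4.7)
The plan is to derive the three parts of the theorem---\eqref{exp pops}, $r_2$, and $r_1$---using Kostant's principal $\fs\fl_2$-triple and slice theorem for the regular nilpotent $\e$.

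For \eqref{exp pops}, I would complete $\e$ to a principal $\fs\fl_2$-triple $\{\e, h, \e^+\}$ in which $h = \sum_i c_i H_{\a_i}$ is the semisimple element acting by $2k$ on $\fg_k$. Kostant's classical decomposition gives
\begin{equation*}
\fg \cong \bigoplus_{j=1}^{l} V_{2m_j}
\end{equation*}
as modules over this $\fs\fl_2$, where $V_n$ denotes the $(n+1)$-dimensional irreducible and the highest weights $2m_j$ are twice the exponents of $\fg$. Since $\ad \e$ acts as the lowering operator on each $V_{2m_j}$, its image $[\e,\fg]$ is the span of all non-highest-weight vectors, so $\fg^{\e^+}$ (the span of the highest-weight vectors) is a natural graded complement with one generator of height $m_j$ in each summand. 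Any homogeneous choice of $\fs$ therefore produces generators $s_j$ of heights $m_j$, giving \eqref{exp pops}.

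For $r_2$ I would first prove the refined splitting $\fb = \fs \oplus [\e, \fn]$. Intersecting \eqref{split} with $\fb$ and using that $\fg^\e \subset \fn_-$, one sees that only the $\fn$-component of $x$ contributes to any $[\e, x]$ landing in $\fb$, whence $\fb \cap [\e,\fg] = [\e,\fn]$. With this in hand, the orbit map
\begin{equation*}
\mu \colon N \times (\e + \fs) \longrightarrow \e + \fb, \qquad (n, \e+s) \longmapsto \text{Ad}_n(\e+s)
\end{equation*}
has differential $(X, s') \mapsto [X,\e+s] + s'$ at the identity section, whose image already contains $[\e, \fn] + \fs = \fb$, and is therefore surjective. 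Since $\dim N + \dim \fs = \dim \fb$ and $N$ is a unipotent group acting freely on the slice $\e + \fs$, $\mu$ is an isomorphism of affine varieties, and pulling back polynomials yields $\cp(\e + \fb)^N \cong \cp(\e + \fs)$.

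For $r_1$ the key inputs are that every element of $\e + \fb$ is regular (regularity is Zariski-open and $\e$ is regular nilpotent) and Kostant's slice theorem: each regular $G$-orbit in $\fg$ meets $\e + \fs$ in exactly one point. Injectivity of $r_1$ is then immediate---any invariant polynomial vanishing on $\e + \fb$ vanishes on the dense regular locus $G \cdot (\e + \fb)$, hence on all of $\fg$. Surjectivity is the main obstacle: given $f \in \cp(\e+\fb)^N$, the plan is to define a function on $\fg^{\on{reg}}$ by $G$-equivariance (well-defined by the uniqueness clause of the slice theorem) and then to extend it across the non-regular locus using normality of $\fg$ together with $\on{codim}(\fg \smallsetminus \fg^{\on{reg}}) \geq 2$. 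The delicate step is verifying that this extension is polynomial; in \cites{K1,K2} this is accomplished by computing the differential of the Chevalley map $\fg \to \fg // G$ along $\e + \fs$ and checking it has full rank, which simultaneously yields the algebraic independence of the $I_j$ implicit in $r_1$ being an isomorphism rather than a mere inclusion.
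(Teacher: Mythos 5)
This theorem is background quoted from Kostant; the paper itself gives no proof (it only cites \cites{K1,K2,K3}), so your proposal has to be judged as a reconstruction of Kostant's arguments. In outline it is exactly the standard route: the principal $\fs\fl_2$ decomposition for \eqref{exp pops}, the action map $N\times(\e+\fs)\to\e+\fb$ for $r_2$, and the regular cross-section/adjoint-quotient theorem for $r_1$. Note, though, that for \eqref{exp pops} you are essentially quoting rather than proving: with the paper's definition $m_j=d_j-1$, the statement that $\fg\cong\bigoplus_j V_{2m_j}$ with the exponents as half the highest weights \emph{is} the hard content of K1/K2, so your first paragraph is a citation plus the (correct) observation that any graded complement of $[\e,\fg]$ has the same graded dimensions as the span of highest weight vectors.

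Two steps are genuinely weaker than you present them. In the $r_2$ step, the image of the differential $(X,s')\mapsto[X,\e+s]+s'$ does not ``already contain $[\e,\fn]+\fs$'': the term $[X,s]$ need not lie in $[\e,\fn]$, and you need the height-filtration (triangularity) argument --- the lowest-height component of $[X,\e+s]$ is $-[\e,X_k]$ with $X_k$ the lowest homogeneous piece of $X$, and this cannot lie in $\fs$ because $\fs\cap[\e,\fg]=0$ --- to get injectivity and hence surjectivity of the differential. More seriously, ``$\dim N+\dim\fs=\dim\fb$ and the action is free, therefore $\mu$ is an isomorphism of affine varieties'' is not an argument: neither freeness nor surjectivity of $\mu$ is established, and a bijective morphism is not automatically an isomorphism; Kostant proves this by an induction along the height filtration that constructs the inverse polynomially (alternatively one must invoke bijectivity plus \'etaleness plus smoothness of the target). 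Finally, your parenthetical justification that every element of $\e+\fb$ is regular (``regularity is Zariski-open and $\e$ is regular nilpotent'') is wrong as stated --- openness only gives regularity near $\e$, not on the whole affine subspace; regularity of all of $\e+\fb$ is itself a theorem of Kostant, though for injectivity of $r_1$ you can bypass it, since density of $G\cdot(\e+\fs)=\fg^{\on{reg}}$ already suffices. The surjectivity of $r_1$ you rightly flag as the delicate point and defer to Kostant's computation of the differential of the adjoint quotient along the slice together with the codimension-at-least-two extension; as written, your proposal is a faithful roadmap to Kostant's proof rather than a self-contained one, which is consistent with how the paper itself treats this statement.
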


The following is our main result. 
\begin{theorem}\label{main}
\begin{enumerate}
\item
There is an explicit algorithm for constructing the inverse
to $r_2$ in \er{restriction}:
$$r_2^{-1}: \cp(\e+\fs)\to \cp(\e+\fb)^N.$$

More precisely, let the $\xi_j$ be the coordinates of a general point 
$$\e+\sum_{j=1}^l \xi_j s_j\in \e+\fs.$$ 
Then
there is an algorithm for constructing 
$l$ primitive invariant function $I_j$ defined on $\e+\fb$ of degree $d_j$ such that 
\begin{equation}\label{strong}
I_j(\e+\sum_{i=1}^l \xi_i s_i)=\xi_j,\quad 1\leq j\leq l. 
\end{equation}

\item 
Furthermore, there is an explicit algorithm for constructing the inverse to $r_1$ in \er{restriction} such that the invariant functions are defined on the whole $\fg$. 
\end{enumerate}
\end{theorem}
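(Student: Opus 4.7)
The plan is to prove part (i) by an inductive reduction along the height grading of $\fb$, and then to extend the reduction through the full principal grading of $\fg$ to handle part (ii).

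For part (i), take $\e + y \in \e + \fb$ and decompose $y = y_0 + y_1 + \cdots + y_M$ by height, with $y_k \in \fg_k$. The engine is the graded form of Kostant's splitting: $\fg_k = (\fs \cap \fg_k) \oplus [\e, \fg_{k+1}]$ for each $k \geq 0$, which follows from \eqref{split} because $\text{ad}(\e)$ shifts the height grading by $-1$. I would induct on $k$ from $0$ up to $M$. At stage $k$, assume earlier corrections have placed the components of height $< k$ into $\fs$, and decompose the current height-$k$ part as $\tilde y_k = s'_k + [\e, Z_{k+1}]$ with $s'_k \in \fs \cap \fg_k$ and $Z_{k+1} \in \fg_{k+1}$. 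Apply $\text{Ad}_{\exp(Z_{k+1})}$: the first-order correction $[Z_{k+1}, \e] = -[\e, Z_{k+1}]$ lies in $\fg_k$ and cancels the $[\e, \fg_{k+1}]$-summand of $\tilde y_k$, while every higher-order bracket $(\text{ad}(Z_{k+1}))^n(y_j)$ lands in $\fg_{n(k+1)+j}$, which is strictly above height $k$ for $n \geq 1$ and $j \geq -1$. Hence heights below $k$ are preserved and the process terminates after finitely many steps at an element $\e + \sum_j \xi_j s_j \in \e + \fs$. Setting $I_j(\e + y) := \xi_j$ gives a polynomial in the coordinates of $y$ (each step is a polynomial operation), which is $N$-invariant because the $\fs$-form is unique per $N$-orbit, and satisfies \eqref{strong} because on $\e + \fs$ all $Z_{k+1}$ vanish and the algorithm is the identity.

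For part (ii), a general $x \in \fg$ must first be conjugated into $\e + \fb$. My plan is to begin with an $H$-normalization of the height-$(-1)$ part: when $x_{-1} = \sum c_i e_{-\alpha_i}$ with all $c_i \neq 0$, solving $\alpha_i(h) = c_i$ (possible by linear independence of the simple roots) sends $x_{-1}$ to $\e$. To eliminate the components at heights $\leq -2$, I would run an inductive reduction dual to that of part (i), using conjugations whose graded degrees are negative but chosen so that emerging lower-height corrections can either be absorbed by counterbalancing $N$-conjugations or forced to cancel by the homogeneity constraint $I_j(\lambda x) = \lambda^{d_j} I_j(x)$ coupled with $H$-weight zero. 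Equivalently, one may reconstruct the extension monomial-by-monomial on $\fg$: every $H$-weight-zero monomial of total degree $d_j$ projects to a unique monomial on $\e + \fb$ under the specialization $e_{-\alpha_i} \mapsto 1$ and $e_{-\alpha} \mapsto 0$ for non-simple negative $\alpha$, and $G$-invariance couples the coefficients into a system uniquely solvable from the data of part (i).

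The principal obstacle is part (ii). Part (i) is clean because $N$-conjugation respects the filtration from below — brackets with positive-height elements cannot drop below a given height — but no such property holds for $\fn^-$-conjugation in the full principal grading, where nested brackets generically create new, strictly lower-height contributions that cannot be canceled term by term. I expect the author organizes the reduction either via the principal $\fsl_2$-triple $(e, H_0, \e)$, exploiting that $\fs$ is the highest-weight space so that weight-raising by $e$ systematically couples negative with non-negative heights, or through a formal degree/weight argument as above that makes the $\fn^-$-coordinates auxiliary; this step is where the real technical subtlety should lie.
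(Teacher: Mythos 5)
Your part (i) is correct but follows a genuinely different route from the paper. You construct $r_2^{-1}$ by the classical normal-form (gauge-fixing) induction along the height grading, using $\fg_k=(\fs\cap\fg_k)\oplus[\e,\fg_{k+1}]$ and successive conjugations by $\exp(Z_{k+1})\in N$; this is sound, provided you justify the uniqueness of the normal form (equivalently that $N\times(\e+\fs)\to\e+\fb$ is an isomorphism, which you may quote from the Kostant results behind Theorem \ref{Kos iso} rather than merely assert), and you should also say a word about why the resulting $\xi_j$ has the stated degree $d_j$ (quasi-homogeneity under the $\mathbb{C}^*$-action combining scaling with the grading element $x_0$). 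The paper does something different: it never conjugates anything, but determines the Taylor coefficients $\la\p_U\p_\e^b\p_p^a,I_j\ra$ of $I_j$ directly by enforcing full $\fg$-invariance via the integration-by-parts Lemma \ref{int by parts}, seeded by \eqref{strong}, with compatibility, Weyl invariance and algebraic independence checked separately. Your route is more transparent on $\e+\fb$; the paper's coefficient recursion has the advantage that it extends verbatim past $\e+\fb$, which is exactly what part (ii) requires.

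For part (ii) there is a genuine gap. Conjugating a general $x\in\fg$ into $\e+\fb$ works only on a dense open set (your $H$-normalization already needs all $c_i\neq0$), and even there it yields a locally defined, generically rational expression rather than an algorithmically produced polynomial on all of $\fg$; you yourself note that brackets with negative-height elements destroy the filtration argument that made part (i) work. Your fallback, reconstructing the invariant monomial-by-monomial from $H$-weight zero together with the specialization $e_{-\alpha_i}\mapsto1$, $e_{-\alpha}\mapsto0$, only pins down the coefficients of monomials containing no root vectors of height $\le-2$: all other monomials are annihilated by the specialization, so the data of part (i) says nothing about them directly, and the claim that ``$G$-invariance couples the coefficients into a uniquely solvable system'' is an assertion of exactly the missing content, not an algorithm. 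The easy half of your claim is the paper's Lemma \ref{dwp and dwep} (formula \eqref{multin}), which recovers the dependence on the $e_{-\alpha_i}$ from the restriction; the real work of part (ii) is the recursion \eqref{neg induct}: a lexicographic induction on $(\beta,-o(W))$ in which a factor $w(1)\in\cw$ is written as $[\e,v_1]$ and Lemma \ref{int by parts} trades it for terms with strictly smaller $(\beta,-o(W))$, together with the verification that the outcome is independent of the choice of $v_1$. Without a mechanism of this kind, your part (ii) remains a plan rather than a proof.
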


We present our basic setup and compution techniques in Section 2. In Section 3, we present our algorithms in the proofs of the two parts of our main Theorem \ref{main} together with several propositions. In part (i), our algorithm for constructing $r_2^{-1}$ lifts
the values of the invariant functions from the slice $\e+\fs$ to $\e+\fb$ and then to the Cartan subalgebra $\fh$. In Proposition \ref{weyl inv}, we prove the Weyl invariance of the resulted function on $\fh$. 
In part (ii), we present the similar algorithm for constructing $r_1^{-1}$ to define the invariant functions on $\fg$. The complexity of this step is much bigger than the previous step, and to a large extent accounts for the difficulty in getting the invariant functions on the whole Lie algebra.   
In Section 4, we outline how the author has implemented the 
algorithm on the software Maple. 

Because of the special roles played by exceptional Lie algebras in mathematics and physics, we expect our algorithm and the explicit invariant functions it produces to have applications in a range of areas such as integrable systems and higher Casimir operators. For example, to get all the first integrals for a full Toda flow \cite{GS} on an exceptional Lie algebra, one needs certain portions of such explicit invariant functions.

\medskip

\noindent{\bf Acknowledgment.} The author thanks Ian Anderson for help with Maple and Luen-Chau Li for interest in this work. He also thanks the referee for suggestions which improve the exposition of the paper. 

\section{Setup and computation techniques}
Our later calculations rely on the following setup of Kostant \cite{K2} in an essential way. We have previously used such a setup extensively and developed some techniques for computation in \cite{LN}. This paper is a further application of such techniques. 

To begin with, we recall that the polynomial algebra ${\mathcal P}(\fg)$ can be identified with the symmetric algebra
$S = S(\fg^*)$ on $\fg^*$, the dual of $\fg$.
On the other hand, we can associate to each $x\in \fg$ a differential
operator $\partial_{x}$ on $\fg,$ defined by 
\begin{equation}\label{p x}
(\partial_{x}f)(y) = \frac{d}{dt}{\Big|_{t=0}} f(y + tx), \quad f\in
C^{\infty}(\fg).
\end{equation}
In this way we have a linear map $x\mapsto \partial_{x}$ which can be
extended to an isomorphism from the symmetric algebra
$S_{*} = S(\fg)$ on $\fg$ to the algebra of differential operators
$\partial$ with constant coefficients on $\fg.$  From now onwards we
will identify these two spaces. With this identification,
we have a nondegenerate pairing between $S_{*}$ and $S$
given by
\begin{equation}\label{pairing}
\langle \partial,f\rangle=(\partial f)(0),\quad \partial\in S_{*},\ f\in S,
\end{equation}
where $(\partial f) (0)$ denotes
the value of the function $\partial f$ at $0\in \fg.$
It is clear that both $S_*$ and $S$ are graded from the tensor structure:
$S_{*} = \oplus_{k\geq 0} S_*^k$, $S =\oplus_{k\geq 0} S^{k},$ 
and $S_*^j$ pairs nontrivially only with $S^j$. 

If $f\in S^k$ and $x\in \fg,$ it follows from the Taylor expansion that
\begin{equation}\label{Taylor}
\Big\langle \frac{\p_x^k}{k!},f\Big\rangle=f(x).
\end{equation}

It is clear that the adjoint action of $G$ on $\fg$ can  be naturally extended to an action of $G$ on $S_{*}.$ On the other hand, $S$ is a $G$-module as its contragredient by \er{def G action}. (We denote the actions of $G$ and later of $\fg$ by a dot.) We have 
\begin{equation}\label{inv under G}
\langle g\cdot\partial, g\cdot f\rangle = \langle \partial, f\rangle,\quad \forall g\in G,\ \partial \in S_{*},\ f\in S.
\end{equation}
By differentiation, $S$ and $S^*$ become $\fg$-modules and
the actions of $\fg$ on both spaces are by derivations. Therefore we have the following properties:
\begin{align}
\label{homom}
&[x,y]\cdot \partial=x\cdot(y\cdot \p)-y\cdot(x\cdot \p),\quad x,y\in \fg,\ \p\in S_*, &&\text{Lie alg hom}\\
\label{action}
&x\cdot \partial_{y} = \partial_{[x,y]},\quad x,\, y\in \fg, &&\text{adj action}\\
\label{prod rule}
&x\cdot (\partial\delta)=(x\cdot \partial)\delta+\partial(x\cdot \delta),\quad x\in \fg,\, \partial, \delta\in S_*,&&\text{derivatioin}\\
\label{power rule}
&x\cdot\partial^n=n \partial^{n-1}(x\cdot \partial),&&\text{power rule}\\
\label{inv property}
&f\in I(\fg)\Longrightarrow x\cdot f=0,\quad \forall x\in \fg.&&\text{inv property}
\end{align}

Since the pairing between $S_{*}$ and $S$ obeys \er{inv under G},
it follows from derivation that the $\fg$-actions satisfy
\begin{equation*}
\langle x\cdot \partial,f\rangle+\langle \partial,x\cdot f\rangle=0,\quad \forall x\in \fg,\ \p\in S_*,\ f\in S.
\end{equation*}
This and \er{inv property} imply that
\begin{equation}\label{source}
\langle x\cdot \partial,f\rangle=0,\quad \forall f\in I(\fg),\, x\in \fg.
\end{equation}

There is a grading element $x_0\in \fh$ defined by the conditions that 
\begin{equation}\label{def x0}
\a_i(x_0)=1,\quad \forall\, 1\leq i\leq l. 
\end{equation}
By \er{def x0} and \er{def ht}, $\a(x_0)=o(\a)$, and $[x_0,e_\a]=o(\a)e_\a$. Thus the graded subspaces from \er{height} are $\fg_k=\{x\in \fg \,\big|\, [x_0,x]=kx\}$. This motivate the following definition of the weight structure of \cite{K1} on $S_*$. For each $k\in \Bbb Z,$ define
\begin{equation}\label{components}
\begin{split}
S_k& =\{\partial\in S_*\,\big |\,x_0\cdot \partial=k\partial\}.
\end{split}
\end{equation}

Applying \er{source} to $x_0\in \fh$ gives us the first vanishing result of Kostant \cite{K2}. If 
$\partial\in S_k$ for $k\neq 0$, then by \er{components}, 
$\partial=\frac{1}{k} x_0\cdot \partial$. 
Then in view of \er{source}, we have
\begin{equation}\label{weight van}
\partial\in S_k\text{ for }k\neq 0\Longrightarrow
\langle \partial,f\rangle =0\,\,\,\,\hbox{for all}\,\,f\in I(\fg).
\end{equation}

Applying \er{source} to a general $x\in \fh$ gives us the following refined vanishing. 
\begin{lemma}[{\cite{K3},\cite{LN}*{Lemma 3.1}}]\label{use fh}  
For all $f\in I(\fg),$ $p\in \fh,$
$$
\left< \partial^{n}_{p}\prod_{\a\in \Delta} \partial^{m_{\a}} _{e_{\a}}, f
\right>  =0
$$
unless $\sum_{\a\in X} m_{\a}\, \a =0.$
\end{lemma}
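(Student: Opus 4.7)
The plan is to view this as a natural refinement of the weight vanishing \eqref{weight van}: that result used one distinguished element $x_0\in\fh$ to kill everything of nonzero $x_0$-weight, whereas here we want to exploit the whole Cartan $\fh$ and its simultaneous weight decomposition on $S_*$ to kill everything of nonzero $\fh$-weight. The key observation is that the element $\partial = \partial_p^n\prod_\a \partial_{e_\a}^{m_\a}$ is, up to scalar, a simultaneous $\fh$-eigenvector whose weight is precisely $\sum_\a m_\a \a$; once we verify this, we feed it into the invariance identity \eqref{source}.

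For the first step, fix an arbitrary $q\in\fh$. Since $\fh$ is abelian, \eqref{action} gives $q\cdot \partial_p = \partial_{[q,p]}=0$, and for each root vector $e_\a$ we have $q\cdot \partial_{e_\a}=\partial_{[q,e_\a]}=\a(q)\,\partial_{e_\a}$. Combining the power rule \eqref{power rule} with the derivation property \eqref{prod rule} and iterating over the product, I get
$$
q\cdot\Big(\partial_p^n \prod_{\a\in\Delta}\partial_{e_\a}^{m_\a}\Big)
=\Big(\sum_{\a\in\Delta} m_\a\, \a(q)\Big)\,\partial_p^n\prod_{\a\in\Delta}\partial_{e_\a}^{m_\a}.
$$
Thus $\partial$ is an $\fh$-weight vector with weight $\lambda:=\sum_\a m_\a \a\in\fh^*$.

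For the second step, apply the invariance identity \eqref{source} with $x=q\in\fh$ and any $f\in I(\fg)$: this yields
$$
\lambda(q)\,\langle \partial, f\rangle = \langle q\cdot \partial, f\rangle = 0
$$
for every $q\in\fh$. If the functional $\lambda=\sum_\a m_\a \a$ is nonzero on $\fh$, then there is some $q\in\fh$ with $\lambda(q)\neq 0$, and dividing yields $\langle \partial, f\rangle=0$. The contrapositive is the stated conclusion: nonvanishing forces $\sum_\a m_\a \a = 0$.

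There is no real obstacle here; the only subtlety to flag is that the element $q$ used to test invariance is chosen \emph{independently} of the $p$ appearing inside $\partial_p^n$, which is exactly what makes the $\partial_p^n$ factor harmless (it contributes $0$ to the weight because $[q,p]=0$) and isolates the contribution of the root vectors. In spirit this is the same mechanism as \eqref{weight van}, but refined from the single grading element $x_0$ to the full Cartan.
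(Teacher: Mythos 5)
Your proof is correct and matches the approach the paper indicates: the paper does not spell out a proof (it cites Kostant and \cite{LN}*{Lemma 3.1}), but its lead-in sentence says the lemma follows by applying \eqref{source} to a general element of $\fh$, which is precisely your argument of exhibiting $\partial_p^n\prod_\a \partial_{e_\a}^{m_\a}$ as an $\fh$-weight vector of weight $\sum_\a m_\a\a$ and testing against a $q\in\fh$ on which that weight is nonzero.
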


In our later constructions, we will further exploit \er{source} by applying it to other elements in $\fg$. 
The most convenient formulation for us is the following ``integration by parts" formula to ``move things around."
\begin{lemma}[\cite{LN}*{Lemma 6.2}]\label{int by parts} Let  $f\in I(\fg),$  then for all $x,y\in \fg$, $\p\in S_*$, 
and $m\geq 0,$ we have
\begin{equation}\label{gen ibp}
\langle \partial_x^m\partial_{[x,y]}\p,f\rangle
=\frac{1}{m+1}\big\la \partial_x^{m+1}(y\cdot \p),f\big\ra.
\end{equation}
\end{lemma}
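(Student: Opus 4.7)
The plan is to derive \er{gen ibp} by applying the fundamental vanishing relation \er{source} to a carefully chosen element, and then expanding that element using the derivation rules \er{prod rule}, \er{power rule}, and the adjoint action formula \er{action}. The key observation is that $\p_x^{m+1}\p$ already contains an $(m{+}1)$-st power of $\p_x$, so letting $\fg$ act on it by a Lie-algebra element distinct from $x$ will, via the power rule, produce one factor of $\p_x$ worth of bracket that we can interchange with the remaining $\p_x^m$.

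Concretely, for $f\in I(\fg)$, I would start from the identity $\la y\cdot(\p_x^{m+1}\p), f\ra = 0$, which follows directly from \er{source}. Using the product rule \er{prod rule} I would split
$$y\cdot(\p_x^{m+1}\p) = (y\cdot \p_x^{m+1})\,\p + \p_x^{m+1}(y\cdot \p).$$
The second summand, once paired with $f$, is precisely the right-hand side of \er{gen ibp} (up to the factor $1/(m{+}1)$). For the first summand, the power rule \er{power rule} gives $y\cdot \p_x^{m+1} = (m+1)\p_x^m(y\cdot \p_x)$, and then \er{action} combined with antisymmetry of the bracket rewrites $y\cdot \p_x = \p_{[y,x]} = -\p_{[x,y]}$.

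Substituting back and pairing with $f$, the vanishing relation becomes
$$0 = -(m+1)\la \p_x^m \p_{[x,y]} \p, f\ra + \la \p_x^{m+1}(y\cdot \p), f\ra,$$
which rearranges immediately to \er{gen ibp}. There is no real obstacle in this proof; it is essentially a one-line manipulation. The only subtlety worth remarking on is the choice to let $y$ rather than $x$ act at the outset: acting by $x$ would be useless because $y\cdot\p_x$ gets replaced by $x\cdot\p_x = \p_{[x,x]} = 0$, destroying the very term we wish to generate.
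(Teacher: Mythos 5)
Your proof is correct and is essentially the paper's own argument: it uses exactly the same ingredients --- the vanishing relation \er{source}, the product rule \er{prod rule}, the power rule \er{power rule}, and \er{action} --- merely starting from $\la y\cdot(\p_x^{m+1}\p),f\ra=0$ and expanding, rather than rewriting the left-hand side of \er{gen ibp} step by step as the paper does. The two presentations differ only in the direction in which the chain of identities is read.
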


For the reader's convenience, we repeat the brief proof. 
\begin{proof}
By using \er{action}, \er{power rule}, \er{prod rule} and \er{source}, we
find that
\begin{align*}
\langle \partial_x^{m} \partial_{[x,y]}\partial, f\rangle
&=-\langle \partial_x^{m} (y\cdot \partial_{x})\partial, f\rangle\\
&= -\frac{1}{m+1} \langle (y\cdot \partial_x^{m+1})\partial,f\rangle\\
&=-\frac{1}{m+1} \langle y\cdot (\partial_x^{m+1}\partial),f\rangle
   +\frac{1}{m+1} \langle \partial_x^{m+1}(y\cdot \partial),f\rangle\\
&=\frac{1}{m+1} \big\langle \partial_x^{m+1}(y\cdot \partial),f\big\rangle.
\end{align*}
\end{proof}

\section{Constructive proof of the main theorem}

In this section, we present our algorithms in the forms of proofs to the two parts of our main Theorem \ref{main}, and we present several supporting propositions. In view of \er{Taylor} and the multinomial theorem, to obtain a function $f(x)$ we can choose a basis for $\fg$ and compute the derivatives \er{pairing} where $\partial$ is a differentiation operator constructed from the basis vectors. See the end of this section for how to assemble the function. The most natural basis for $\fg$ is the one of root vectors, for example a Chevalley basis, and we will express our final result in this basis. But in the course of the algorithm, we need to use a different basis of $\fg$, which we define first. This basis is crucial to our inductive procedures later.
Define
\begin{equation}\label{ujk}
s_j^k=(\ad_\e)^k s_j,\quad 0\leq k\leq 2m_j,\ 1\leq j\leq l.
\end{equation}
Then $s_j^k=[\e,s_j^{k-1}]$ for $k\geq 1$. 

\begin{lemma}\label{base change}
$\{s_j^k\}_{1\leq j\leq l}^{0\leq k\leq 2m_j}$ \text{is a basis of } $\fg$.
\end{lemma}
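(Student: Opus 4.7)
The plan is to invoke the principal $\fs\fl_2$-triple extending $\e$. A dimension count gives
$\sum_{j=1}^l (2m_j+1) = 2\sum_{j=1}^l m_j + l = 2|\Delta_+| + l = \dim\fg$,
so it is enough to establish linear independence of the $s_j^k$.

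First I would extend $\e$ to a principal $\fs\fl_2$-triple $(E,\, 2x_0,\, \e)$; the choice $H=2x_0$ is forced because $[x_0,\e]=-\e$. By Kostant's theorem, $\fg$ decomposes under this triple as $\fg = V_1 \oplus \cdots \oplus V_l$ with $\dim V_i = 2m_i+1$. For each $i$ pick a highest weight vector $u_i \in V_i \cap \fg_{m_i}$; then $\{(\ad_\e)^k u_i : 0\leq k\leq 2m_i\}$ is a basis of $V_i$, so their union over $i$ is a basis of $\fg$.

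Next I would express each $s_j$ in this basis. Since $s_j \in \fg_{m_j}$ and
$\fg_{m_j} = \bigoplus_{i:\, m_i \geq m_j} V_i[2m_j]$,
with $V_i[2m_j]$ spanned by $u_i$ when $m_i=m_j$ and by a nonzero scalar multiple of $(\ad_\e)^{m_i-m_j}u_i$ when $m_i>m_j$, one obtains
\begin{equation*}
s_j \;=\; \sum_{i:\, m_i = m_j} a_{ji}\, u_i \;+\; \sum_{i:\, m_i > m_j} b_{ji}\,(\ad_\e)^{m_i - m_j} u_i.
\end{equation*}
Because $\fs$ is a complement of $[\e,\fg]$, the induced map $\fs \to \fg/[\e,\fg] \cong \bigoplus_i \bc\, u_i$ is an isomorphism of graded vector spaces, which forces each diagonal block $(a_{ji})_{i,j:\, m_i = m_j = m}$ to be invertible for every exponent value $m$.

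Applying $(\ad_\e)^k$ then yields
\begin{equation*}
s_j^k \;=\; \sum_{i:\, m_i = m_j} a_{ji}\,(\ad_\e)^k u_i \;+\; \sum_{i:\, m_i > m_j} b_{ji}\,(\ad_\e)^{m_i - m_j + k} u_i,
\end{equation*}
so at each fixed height $n$ the change-of-basis matrix from the known basis $\{(\ad_\e)^{m_i - n} u_i : m_i \geq |n|\}$ of $\fg_n$ to the proposed set $\{s_j^{m_j - n} : m_j \geq |n|\}$, with rows and columns ordered by increasing $m$, is block upper-triangular with the invertible diagonal blocks just identified. Hence it is invertible at every height, giving the desired basis. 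The main obstacle is the simultaneous bookkeeping of the two filtrations (by height and by exponent value), together with checking that the Kostant slice condition translates precisely into invertibility of those diagonal blocks; the structural input underneath is Kostant's decomposition theorem guaranteeing the $\fs\fl_2$-multiplicities $2m_j+1$.
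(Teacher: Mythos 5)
Your proof is correct, and it follows essentially the same route the paper takes: the paper's one-line proof simply cites the splitting $\fg\cong\fs\oplus[\e,\fg]$ together with Kostant's principal $\fs\fl_2$ decomposition in \cite{K1}, and your argument is exactly a detailed unpacking of that citation (dimension count via $\sum_j(2m_j+1)=\dim\fg$, the decomposition into irreducibles of dimensions $2m_i+1$, and the height-by-height triangular change of basis forced by the slice condition). No gaps; this is just the worked-out version of what the paper delegates to Kostant.
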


\begin{proof}
This follows from \er{split} and Kostant's work in \cite{K1}.
\end{proof}

The height of $s_j^k$
is $m_j-k$ by \er{exp pops} and \er{def e0}, and therefore
\begin{equation}\label{basis}
\cu:=\{s^k_j\}_{0\leq j\leq l}^{0\leq k\leq m_j-1}\ \text{ is a basis of }\fn=\bigoplus_{k> 0}\fg_k.
\end{equation}
We will denote a general element of $\cu$ by $u$. Note that for such $u$'s, either $u\in \fs$ or $u\in [\e, \fg]$ in the decomposition \er{split}. (Actually the preimage of such a $u\in  \im(\ad_\e)$ is unique since $\ad_\e$ has no kernel in $\fb$ \cite{K1}.)

Similarly we have that 
\begin{equation}\label{neg basis}
\cw:=\{s_j^k\}_{1\leq j\leq l}^{m_j+2\leq k\leq 2m_j}\ \text{ is a basis of }\bigoplus_{k\leq -2} \fg_k. 
\end{equation}
We will denote a general element of $\cw$ by $w$. Note that all such $w$'s belong to $[\e, \fg]$ in \er{split}. 

By the special nature of $\fg_0=\fh$ and $\fg_{-1}$, this author will use their natural basis $\{H_{\a_i}\}_{1\leq j\leq l}$ and $\{e_{-\a_i}\}_{1\leq j\leq l}$ in the presentation of this paper. 

Therefore we set out to compute various derivatives of the invariant function using the new bases $\cu$ and $\cw$ inductively. 

\begin{proof}[Proof of Theorem \ref{main} (i)]  
Since we want to construct invariant functions, we will enforce 
the invariance property \er{source} or more explicitly Lemma \ref{int by parts}. In this part, we will inductively show that this rule and the condition \er{strong} determine the invariant function $I_j$ on $\e+\fb$. 
Then we prove that the constructed function when restricted on $\fh$ is {invariant under the Weyl group} in Propositions \ref{weyl inv}. 
Proposition \ref{indep} quickly shows that such functions are algebraically independent. 

In this proof, we will work with a fixed $I_j$, and we often write $I$ for short. Also $d=d_j$ and $m=m_j$. 

Using \er{Taylor} and the multinomial theorem, to describe $I\in \cp(\e+\fb)$, we need to determine all the following polynomials
\begin{equation}\label{mg}
\la \p_U\p_\e^b\p_p^a,I\ra=\la \p_{u(1)}\p_{u(2)}\cdots\p_{u(c)}\p_\e^b\p_p^a,I\ra
\end{equation}
of degree $a$ in $p\in \fh$. 
Here $U=(u(1),\cdots,u(c))$ is a sequence with each $u(i)$ (possibly repeating) from $\cu$ in \er{basis}. 
The expression \er{mg} is nonzero only if 
\begin{equation}
\label{first non van}
\begin{split}
a+b+c&= d,\\
\sum_{i=1}^c o(u(i))&=b,
\end{split}
\end{equation}
by \er{pairing} and \er{weight van}. (This is related to the $x_0$-grading by Kahzdan in \cite{K3}.) 

We run increasing induction on $a$ and decreasing induction on $b$ to define the polynomials in \er{mg}.

The first case is $a=0$ and $b=d-1=m$. Then we need to determine all the 
$\la \p_u\p_\e^m,I\ra$ with $o(u)=m$. For $u=s_j$ where $j$ is our fixed index, 
applying the multinomial theorem and the vanishing results, we have 
\begin{align}
I_j(\e+s_j) & = \frac{1}{d!} \bigl\la (\p_{s_j}+\p_\e)^{d},I_j\bigr\ra && \text{by }\er{Taylor}\nm\\
& = \frac{1}{m!}\la \p_{s_j}\p_\e^{m},I_j\ra. && \text{by }\er{weight van}
\label{neater}
\end{align}
Therefore the defining condition \er{strong} in this case, $I_j(\e+s_j)=1$, implies
\begin{equation}\label{use fctl}
\la \p_{s_j}\p_\e^{m},I_j\ra=m!.
\end{equation}

For $u=s_i\in \fs$ with $o(s_i)=m$ but $i\neq j$ (hence the multiplicity of $m$ as an exponent is at least 2 and this happens, among all the simple Lie algebras, only for $D_{2n}$ and $m=2n-1$, $n\geq 2$), similarly to \er{neater}, the defining condition \er{strong} in this case, $I_j(\e+s_i)=0$, implies 
\begin{equation}\label{more van 1}
\la \p_{s_i}\p_\e^m,I_j\ra=0,\quad o(s_i)=o(s_j),\ i\neq j.
\end{equation}
For $u=[\e,v]\in [\e, \fg]$, the vanishing property \er{source}, together with \er{action} and \er{power rule}, forces 
\begin{equation}\label{just put in}
\la \p_u\p_\e^m,I\ra=\la \p_{[\e,v]}\p_\e^m,I\ra=-\frac{1}{m+1}\la v\cdot \p_\e^{m+1},I\ra=0.
\end{equation}

Now we compute \er{mg} for $a=0$ and all $b$. (Such expressions for \er{mg} are all numbers.) If in $U$, at least one $u(i)\in [\e, \fg]$, say $u(1)=[\e,v_1]$, then by Lemma \ref{int by parts} we have, with 
$\tilde U=(u(2),\cdots,u(c))$, 
\begin{equation}\label{quicker}
\begin{split}
\la \p_U\p_\e^b,I\ra &= \la \p_{u(1)}\p_{\tilde U}
\p_\e^b,I\ra=\la \p_{[\e,v_1]}\p_{\tilde U}
\p_\e^b,I\ra=\frac{1}{b+1}\Big\la\p_\e^{b+1} \big(v_1\cdot(\p_{\tilde U})\big),I\Big\ra\\
&=\frac{1}{b+1}\Big(\sum_{n=2}^c \la \p_\e^{b+1}\p_{u(2)}\cdots\p_{[v_1,u(n)]}\cdots\p_{u(c)},I\ra\Big),
\end{split}
\end{equation}
where all the terms on the right have $(b+1)$ $\e$'s, and hence are known from the induction hypothesis.
We need to show {compatibility} when there are two $u(i)\in [\e, \fg]$, and this is done in Proposition \ref{compatibility}. 

On the other hand, \er{strong} implies 
\begin{equation}\label{more van 2}
\la \p_U\p_\e^b,I\ra=0,\quad \text{if }c\geq 2\text{ and all the }u(i)=s_k\in \fs.
\end{equation} 

Now assume that the \er{mg} have been computed when the degree in $p$ is $\leq a-1$ with $a\geq 1$, and we compute it for degree $a$. 

We in effect use the fact that every $p\in \fh$ is in $[\e, \fg]$ in \er{split}. Actually for $p=\sum_{i=1}^l p_i H_{\a_i}$, define 
\begin{equation}\label{def xp}
x_p=\sum_{i=1}^l p_i e_{\a_i},\text{ then }-[\e,x_p]=p.
\end{equation}
Here $x_p$ can be regarded a linear function in $p$ with values in $S_*^1$.

Then Lemma \ref{int by parts} gives  
\begin{equation}\label{a>0}
\begin{split}
\la \p_U\p_\e^b\p_p^{a},I\ra&=-\la \p_U\p_\e^b\p_p^{a-1}\p_{[\e,x_p]},I\ra\\
&=\frac{1}{b+1}\sum_{n=1}^c \la \p_\e^{b+1}\p_{u(1)}\cdots\p_{[u(n),x_p]}\cdots\p_{u(c)}\p_p^{a-1},I\ra\\
&\quad\ +\frac{a-1}{b+1}\la \p_\e^{b+1}\p_{U}\p_{[p,x_p]}\p_p^{a-2},I\ra\\
&=\frac{1}{b+1}\sum_{i=1}^l\sum_{n=1}^{c}p_i \la \p_\e^{b+1}\p_{u(1)}\cdots\p_{[u(n),e_{\a_i}]}\cdots\p_{u(c)}\p_p^{a-1},I\ra\\
&\quad\ +\frac{a-1}{b+1}\sum_{i=1}^l p_i\a_i(p) \la \p_\e^{b+1}\p_{U}\p_{e_{\a_i}}\p_p^{a-2},I\ra,
\end{split}
\end{equation}
by \er{def xp} and hence 
$$
[p,x_p]=\sum_{i=1}^l p_i \a_i(p) e_{\a_i},
$$
which has degree 2 in $p$. The factors in the first sum have degrees $a-1$ in $p$, and the factors in the second sum have degrees $a-2$ in $p$. Using the induction hypothesis and \er{basis}, all such factors can be expressed in the known cases of \er{mg}. 

We can continue \er{a>0} all the way until we get $a=d$, where we have
\begin{equation}\label{a=d}
\begin{split}
\la \p_p^d, I\ra&=(d-1)\sum_{i=1}^l p_i\a_i(p) \la \p_\e\p_{e_{\a_i}}\p_p^{d-2},I\ra.
\end{split}
\end{equation}
Then by \er{Taylor}, our function $I(p)$ on $\fh$ is 
\begin{equation}\label{def ip}
I(p)=\frac{1}{d!}\la \p_p^d,I\ra.
\end{equation}

Proposotions \ref{weyl inv} and \ref{indep} below prove that the $\{I_j\}_{j=1}^l$ constructed this way are 
algebraically independent and invariant under the Weyl group when restricted to the Cartan subalgebra $\fh$. 
\end{proof}

\begin{proposition}\label{compatibility} There is compatibility when there are two choices for $u\in [\e, \fg]$ in \er{quicker}. 
\end{proposition}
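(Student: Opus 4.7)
The plan is to apply (\ref{quicker}) in two different orders---processing $u(1)=[\e,v_1]$ first versus $u(2)=[\e,v_2]$ first---and verify directly that both reductions produce the same number. Write $\tilde U=(u(3),\ldots,u(c))$, and call the two results $B_1$ and $B_2$.

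First I would unfold each reduction. Applying (\ref{quicker}) to $B_1$ expresses it as a sum of terms $\frac{1}{b+1}\la\p_\e^{b+1}\p_{u(2)}\p_{u(3)}\cdots\p_{[v_1,u(n)]}\cdots\p_{u(c)},I\ra$ for $n=2,\ldots,c$, with the formula for $B_2$ obtained by swapping the roles of $v_1$ and $v_2$. The $n=2$ term of $B_1$ acts on $u(2)=[\e,v_2]$ via (\ref{action}) to produce $\p_{[v_1,[\e,v_2]]}$, and symmetrically the $n=1$ term of $B_2$ produces $\p_{[v_2,[\e,v_1]]}$. The Jacobi identity gives
\[
[v_1,[\e,v_2]] - [v_2,[\e,v_1]] = [\e,[v_1,v_2]],
\]
so the ``diagonal'' part of the difference $B_1 - B_2$ equals $\frac{1}{b+1}\la\p_\e^{b+1}\p_{[\e,[v_1,v_2]]}\p_{\tilde U},I\ra$.

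The remaining ``off-diagonal'' terms in $B_1$ and $B_2$ (those with $n\geq 3$) still contain a $\p_{[\e,v_j]}$ factor, so I would apply Lemma \ref{int by parts} a second time to each. The product rule (\ref{prod rule}) combined with (\ref{action}) then produces two kinds of contributions: nested-bracket pieces $\p_{[v_j,[v_i,u(n)]]}$ and crossed pieces carrying both $\p_{[v_2,u(k)]}$ and $\p_{[v_1,u(n)]}$ for distinct $k,n\geq 3$. The crossed pieces from $B_1$ and $B_2$ coincide after the relabeling $k\leftrightarrow n$, so they cancel in the difference. Using Jacobi, the nested-bracket difference equals $[v_2,[v_1,u(n)]] - [v_1,[v_2,u(n)]] = -[[v_1,v_2],u(n)]$. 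A final application of Lemma \ref{int by parts} to the diagonal residue above, with $y=[v_1,v_2]$ and $m=b+1$, yields exactly $\frac{1}{(b+1)(b+2)}\sum_{n\geq 3}\la\p_\e^{b+2}\p_{u(3)}\cdots\p_{[[v_1,v_2],u(n)]}\cdots,I\ra$, which cancels the nested-bracket contribution.

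The main obstacle is purely bookkeeping---tracking all the product-rule expansions from the two parallel reductions and verifying that every term finds its cancelling partner. No new tools beyond Lemma \ref{int by parts}, the product rule (\ref{prod rule}), the adjoint action (\ref{action}), and the Jacobi identity are required, and all identities hold at the level of pairings $\la\cdot,I\ra$ for $I\in I(\fg)$ by (\ref{source}).
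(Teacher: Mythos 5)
Your argument is correct and is essentially the paper's proof: you form the difference of the two reductions, combine the two ``diagonal'' terms via the Jacobi identity into $\p_{[\e,[v_1,v_2]]}$, apply Lemma \ref{int by parts} a second time, and observe that everything cancels. The only (cosmetic) difference is that the paper keeps the actions $v_1\cdot \p_{U'}$ and $v_2\cdot \p_{U'}$ unexpanded and finishes with the homomorphism property \eqref{homom}, whereas you expand them term by term; your crossed-term cancellation combined with the nested Jacobi identity is precisely the expanded content of \eqref{homom}.
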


\begin{proof} Assume, for example, $u(1)=[\e,v_1],u(2)=[\e,v_2]$. Then, with $U'=\\(u(3),\cdots,u(c))$, 
$$
\la \p_U\p_\e^b,I\ra=\la \p_{u(1)}\p_{u(2)}\p_{U'}\p_\e^b,I\ra
$$
can be computed in two ways using $v_1$ or $v_2$ in \er{quicker}. The first answer $A_1$ using $v_1$ is, by \er{prod rule} and \er{action},  
\begin{align*}
A_1&=\frac{1}{b+1}\Big\la\p_\e^{b+1} \big(v_1\cdot(\p_{u(2)}\p_{U'})\big),I\Big\ra\\
&=\frac{1}{b+1}\Big(\la\p_\e^{b+1} \p_{[v_1,u(2)]}\p_{U'},I\ra+\la\p_\e^{b+1} \p_{u(2)}(v_1\cdot \p_{U'}),I\ra\Big)
\end{align*}
and similarly for the second answer $A_2$ using $v_2$. Therefore for the difference, we have 
\begin{align*}
&\quad (b+1)(A_1-A_2)\\
&=\big\la\p_\e^{b+1} \p_{([v_1,u(2)]-[v_2,u(1)])}\p_{U'},I\big\ra+\big\la\p_\e^{b+1} \p_{u(2)}(v_1\cdot \p_{U'}),I\big\ra-\big\la\p_\e^{b+1} \p_{u(1)}(v_2\cdot \p_{U'}),I\big\ra\\
&= \big\la\p_\e^{b+1} \p_{[\e,[v_1,v_2]]}\p_{U'},I\big\ra+\big\la\p_\e^{b+1} \p_{[\e,v_2]}(v_1\cdot \p_{U'}),I\big\ra-\big\la\p_\e^{b+1} \p_{[\e,v_1]}(v_2\cdot \p_{U'}),I\big\ra\\
&=\frac{1}{b+2}\bigg(\Big\la\p_\e^{b+2} \big({[v_1,v_2]}\cdot\p_{U'}\big),I\Big\ra+\Big\la\p_\e^{b+2}\big({v_2}\cdot (v_1\cdot \p_{U'})\big),I\Big\ra\\
&\qquad\qquad -\Big\la\p_\e^{b+2} \big({v_1}\cdot (v_2\cdot \p_{U'})\big),I\Big\ra\bigg)\\
&=0,
\end{align*}
where the first term in the second equality uses the Jacobi identity
$$
[v_1,u(2)]-[v_2,u(1)]=[v_1,[\e,v_2]]+[[\e,v_1],v_2]=[\e,[v_1,v_2]],
$$
the third equality uses Lemma \ref{int by parts} again, and the last identity uses the Lie algebra homomorphism property \er{homom} (with its root in the Jacobi identity). 
\end{proof}

\begin{remark} Furthermore when $a\geq 1$, if in $U$ there exists a $u\in [\e, \fg]$, there is an alternative approach similar to \er{quicker}, which is compatible with \er{a>0}, by the same reason as above. 
\end{remark}

\begin{proposition}\label{weyl inv} The $I(p)$ on $\fh$ defined in \er{def ip} is invariant under the Weyl group $W$. 
\end{proposition}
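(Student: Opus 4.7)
My plan is to identify $I(p)$ with the value at $p\in\fh$ of a genuine Chevalley invariant and then invoke the classical argument for Weyl invariance. First, by Chevalley's theorem combined with the isomorphism $r_2\colon I(\fg)\xrightarrow{\cong}\cp(\e+\fs)$ of Theorem \ref{Kos iso}, a homogeneous $G$-invariant $J_j\in I(\fg)$ of degree $d_j$ exists with $J_j(\e+\sum_i\xi_i s_i)=\xi_j$, i.e., satisfying the normalization \er{strong}. Being $G$-invariant, $J_j$ automatically satisfies \er{source} for every $x\in\fg$, in particular for all the $v_1\in\fb$ entering \er{quicker} and for the auxiliary element $x_p$ of \er{def xp} that drives \er{a>0}.

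The crucial step will be to argue that the inductive procedure of Part (i) uniquely determines every pairing $\la\p_U\p_\e^b\p_p^a,I_j\ra$ in \er{mg}: the $(a\uparrow,b\downarrow)$ recursion via \er{quicker} and \er{a>0}, starting from the base data \er{use fctl}--\er{more van 2}, reduces each such pairing to previously computed quantities, and Proposition \ref{compatibility} removes the ambiguity in the choice of which $u(i)\in[\e,\fg]$ to peel off. Applying the same recursion to $J_j$ in place of $I_j$---legitimate because $J_j$ satisfies both \er{strong} and \er{source}---produces the identical pairings. Consequently, taking $U$ empty, $b=0$, $a=d$ and using \er{a=d} to terminate the recursion,
\[
I(p)=\frac{1}{d!}\la\p_p^d,I_j\ra=\frac{1}{d!}\la\p_p^d,J_j\ra=J_j(p)
\]
for every $p\in\fh$, by \er{Taylor} and \er{def ip}.

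It remains to note the classical fact that $J_j|_\fh$ is Weyl invariant: for each simple root $\a_i$, the standard representative $n_i=\exp(e_{\a_i})\exp(-e_{-\a_i})\exp(e_{\a_i})\in G$ normalizes $\fh$ and acts on $\fh$ as the simple reflection $s_{\a_i}$, so $J_j(s_{\a_i}(p))=J_j(\text{Ad}_{n_i}p)=J_j(p)$ by $G$-invariance. Hence $I(p)=J_j(p)$ is fixed by every simple reflection, and therefore by all of $W$. The most delicate point in this plan is the uniqueness assertion in the second paragraph---namely, that iterating \er{a>0} through the $a$-induction always terminates in the $a=0$ base cases fixed by \er{quicker} and \er{use fctl}--\er{more van 2}---but this is precisely the content of the recursive construction carried out in Part (i), once the compatibility afforded by Proposition \ref{compatibility} is granted.
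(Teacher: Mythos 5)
Your argument is essentially correct, but it takes a genuinely different route from the paper. The paper never passes through a pre-existing global invariant: it proves Weyl invariance directly from the relations the algorithm enforces, writing $p=xH_{\a_i}+Y$ with $\a_i(Y)=0$ and running a decreasing double induction on the quantities $D(k,l)=\la \p_{e_{\a_i}}^k\p_{e_{-\a_i}}^k\p_Y^l\p_p^{d-2k-l},I\ra$; the recursion $D(k,l)=\tfrac{2(d-2k-l-1)}{k+1}\,x^2\,D(k+1,l)+D(k,l+1)$, obtained from Lemma \ref{int by parts}, shows that $x$ enters only through $x^2$, hence $I(p)$ is fixed by each simple reflection. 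You instead identify the algorithm's output with the genuine invariant $J_j=(r_2\circ r_1)^{-1}(\xi_j)$ by uniqueness of the recursion (base data \er{use fctl}--\er{more van 2} together with \er{quicker}, \er{a>0}, with Proposition \ref{compatibility} guaranteeing well-definedness) and then invoke the classical fact that restrictions of $G$-invariants to $\fh$ are $W$-invariant via the representatives $\exp(e_{\a_i})\exp(-e_{-\a_i})\exp(e_{\a_i})$. This is sound given the paper's standing use of Theorem \ref{Kos iso}, but two points deserve more care than you give them: the claim that the preimage of $\xi_j$ is homogeneous of degree $d_j$ (equivalently, that a homogeneous degree-$d_j$ invariant restricts to exactly $\xi_j$ with no lower-order corrections, even when an exponent has multiplicity two as for $D_{2n}$) is not literally contained in the quoted statement of Theorem \ref{Kos iso}; it needs Kostant's quasi-homogeneity of the slice, i.e., that $\xi_j$ has weight $d_j$ under the $\bc^*$-action combining dilation with $\mathrm{Ad}\exp(tx_0)$, and this homogeneity is also what legitimizes both $J_j(p)=\tfrac{1}{d!}\la\p_p^d,J_j\ra$ and the derivation of the base cases \er{use fctl}--\er{more van 2} for $J_j$. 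Granting that, your proof is shorter and more conceptual; what the paper's computation buys is self-containedness: it verifies Weyl invariance using only the data the algorithm itself imposes, so it functions as an internal consistency check of the construction rather than a corollary of the existence of the invariant it is trying to produce.
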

\begin{proof}
Since $W$ is generated on $\fh$ by simple reflections $r_i$ through the hyperplanes defined by $\a_i=0$ for $1\leq i\leq l$, we only need to prove the invariance of the function $I(p)$ under $r_i$ for any $i$. Fix an $i$ and we omit it from the notation. 

We use an {\bf orthogonal} basis of $\fh$ with the first vector being $H_\a=H_{\a_i}$. Then we write 
\begin{equation}\label{orth decom}
p=x H_{\a}+Y,\quad Y\perp H_{\a}\Longleftrightarrow \a(Y)=0.
\end{equation}
In this orthogonal basis, the reflection $r_i$ is just the transformation $x\mapsto -x$, and we only need to prove the $\la \p_p^d,I\ra$ in \er{a=d} is a function of $x^2$. For  that purpose we run decreasing induction on $k$ and $l$ to prove that in general the 
$$
D(k,l):=\la \p_{e_\a}^k \p_{e_{-\a}}^k \p_Y^l \p_p^{d-2k-l}, I\ra
$$
are functions of $x^2$, with $\la \p_p^d,I\ra=D(0,0)$. 

When $2k+l>d$, $D(k,l)=0$ by \er{pairing}. When $2k+l=d$, $D(k,l)=\la \p_{e_\a}^k \p_{e_{-\a}}^k \p_Y^l, I\ra$ are constants with respect to $x$, since $Y$ doesn't involve $x$. 

Now by Lemma \ref{int by parts} and using \er{orth decom}, we have, for $d-2k-l\geq 1$, 
\begin{align*}
D(k,l)&=\la \p_{e_\a}^k \p_{e_{-\a}}^k \p_Y^l \p_p^{d-2k-l-1}(x\p_{H_\a}+\p_Y), I\ra\\
&= x\big\la \p_{H_\a}\p_{e_\a}^k \p_{e_{-\a}}^k \p_Y^l \p_p^{d-2k-l-1},I\big\ra+\big\la \p_{e_\a}^k \p_{e_{-\a}}^k \p_Y^{l+1} \p_p^{d-2k-l-1},I\big\ra\\
&= x\big\la \p_{[e_\a,e_{-a}]}\p_{e_\a}^k \p_{e_{-\a}}^k \p_Y^l \p_p^{d-2k-l-1},I\big\ra+D(k,l+1)\\
&=-x\frac{1}{k+1}\Big\la \p_{e_{-\a}}^{k+1} \big(e_\a\cdot (\p_{e_\a}^k \p_Y^l\p_p^{d-2k-l-1})\big),I\Big\ra+D(k,l+1)\\
&=\frac{d-2k-l-1}{k+1}x\a(p)\big\la \p_{e_\a}^{k+1}\p_{e_{-\a}}^{k+1}\p_Y^l\p_p^{d-2k-l-2},I\big\ra+ D(k,l+1)\\
&=\frac{2(d-2k-l-1)}{k+1}x^2 D(k+1,l)+D(k,l+1),
\end{align*}
since 
\begin{align*}
&\quad -e_\a\cdot (\p_{e_\a}^k \p_Y^l\p_p^{d-2k-l-1})\\
&=l\p_{e_\a}^k \p_Y^{l-1}\p_{[Y,e_\a]}\p_p^{d-2k-l-1}+(d-2k-l-1)\p_{e_\a}^k \p_Y^{l}\p_p^{d-2k-l-2}\p_{[p,e_\a]}\\
&=(d-2k-l-1)\a(p)\p_{e_\a}^{k+1} \p_Y^{l}\p_p^{d-2k-l-2}
\end{align*}
due to that $[Y,e_\a]=\a(Y)e_\a=0$, $[p,e_\a]=\a(p)e_\a$, and $\a(p)=2x$  by \er{orth decom}. 
Therefore the appearance of $x$ in $D(k,l)$ is always through an $x^2$ entry. 
\end{proof} 

\begin{proposition}\label{indep} The $\{I_j\}_{j=1}^l$ are algebraically independent. 
\end{proposition}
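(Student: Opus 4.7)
The proof should be a direct consequence of the defining property \eqref{strong} together with the fact that the restriction map $r_2 \colon \cp(\e+\fb)^N \to \cp(\e+\fs)$ is an isomorphism by Theorem \ref{Kos iso}. The key observation is that the $I_j$ have been engineered so that, when restricted to the Kostant slice $\e+\fs$, they become precisely the coordinate functions $\xi_j$ with respect to the basis $\{s_j\}$.

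Concretely, the plan is as follows. Suppose there is a polynomial relation $P \in \bc[T_1,\dots,T_l]$ such that $P(I_1,\dots,I_l) = 0$ as an element of $\cp(\e+\fb)$. Evaluating at a general point $\e+\sum_{i=1}^l \xi_i s_i$ of $\e+\fs$ and using \eqref{strong}, we obtain $P(\xi_1,\dots,\xi_l) = 0$ as a polynomial in $\bc[\xi_1,\dots,\xi_l] = \cp(\e+\fs)$. Since the coordinate functions $\xi_1,\dots,\xi_l$ are algebraically independent over $\bc$, this forces $P = 0$. Hence the $\{I_j\}_{j=1}^l$ are algebraically independent in $\cp(\e+\fb)^N$, and via the isomorphism $r_1$ of \eqref{restriction} (or directly if one prefers to stay on $\e+\fb$) also in $I(\fg)$.

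There is no real obstacle here: all the work has been absorbed into the construction, which produced the $I_j$ satisfying \eqref{strong} by design. The only thing to emphasize is that the argument is compatible with Chevalley's theorem, since $I(\fg)$ has transcendence degree $l$, so producing $l$ algebraically independent invariants automatically yields a set of generators of $I(\fg)$, consistent with calling them \emph{primitive} invariant functions.
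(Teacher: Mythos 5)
Your argument is correct and is essentially the paper's own proof: the paper likewise deduces algebraic independence directly from the defining condition \eqref{strong}, i.e.\ from the fact that the $I_j$ restrict to the coordinate functions $\xi_j$ on the slice $\e+\fs$. You have simply spelled out the restriction-of-a-relation step that the paper leaves implicit.
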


\begin{proof} This is clear from our defining condition \er{strong}, since the $I_j$ restrict to the coordinates $\xi_j$ on the slice $\e+\fs$. 
\end{proof}

\begin{remark} In a sense, the above algorithm is the reversion of the procedures in \cite{LN}*{\S 6}. Here we start with a high root vector and push the function down to $\fh$. In \cite{LN} we derived information about higher and higher root vectors starting from some knowledge on $\fh$. 
The direction here is more delicate.
\end{remark}

\begin{proof}[Proof of Theorem \ref{main} (ii)] 
The further lifting of the invariant function $I=I_j$ to the whole Lie algebra $\fg$ involves considering all such terms
\begin{equation}\label{mmg}
\la\p_W\p_\e^b\p_p^a\p_U,I\ra
\end{equation}
where $\p_W=\p_{w(1)}\cdots\p_{w(\beta)}$ with each $w(i)$ from $\cw$ in \er{neg basis}, and $\p_U$ is the same as in \er{mg}. The absolute value of the total weight of $\p_W$ is 
\begin{equation}\label{ow}
-o(W)=-\sum_{i=1}^\beta o(w_i).
\end{equation}
Similarly to \er{first non van}, \er{mmg} is nonzero only if 
\begin{equation}
\label{second non van}
\begin{split}
\beta+a+b+c=d,\\
-o(W) + b=\sum_{i=1}^c o(u(i)).
\end{split}
\end{equation}  

Note that we do not need any vectors with heights $0$ or $-1$ in \er{mmg}, since all such vectors are accounted for by the $\p_p^a$ and $\p_\e^b$ terms using the following Lemma.

\begin{lemma}\label{dwp and dwep} 
If $a>0$ and one copy of the $p\in \fh$ is replaced by $H_{\a_i}$, then 
\begin{equation}\label{dwp}
\la\p_W\p_\e^b\p_p^{a-1}\p_{H_{\a_i}}\p_U,I\ra
=\frac{1}{a}\, \p_{H_{\a_i}}\big(\la\p_W\p_\e^b\p_p^a\p_U,I\ra\big).
\end{equation}

If $b>0$ and one copy of the $\e$ is replaced by $e_{-\a_i}$, then
\begin{equation}\label{dwep}
\la \p_W\p_\e^{b-1}\p_{e_{-\a_i}}\p_p^{a} \p_U,I\ra=\frac{1}{b} \Big\la\p_\e^{b}\p_p^{a} \big(x_i\cdot (\p_W\p_U)\big),I\Big\ra,
\end{equation}
where $x_i\in \fh$ is the grading element for $\a_i$ specified by the conditions that 
\begin{equation}\label{ith grading}
\a_j(x_i)= \delta_{ji}, \quad j=1,\dots,l.
\end{equation}
\end{lemma}

\begin{proof}[Proof of Lemma \ref{dwp and dwep}] 
Let $p=\sum_{i=1}^l p_i H_{\a_i}\in \fh$. The $\p_{H_{\a_i}}$ on the right hand side of \er{dwp} stands for $\frac{\p}{\p p_i}$. By the definition \er{def e0} and the conditions \er{ith grading}, we have
$$
[\e, x_i] = \sum_{j=1}^l \a_j(x_i) e_{-\a_j} = e_{-\a_i}.
$$
Then the two formulas are easy consequences of the Taylor expansion \er{Taylor} and Lemma \ref{int by parts}. 
\end{proof}

Now returning to the main proof. 
We run induction on the lexicographical order of the pair $(\beta, -o(W))$, with $-o(W)$ defined in \er{ow}, to determine such terms in \er{mmg}. 
Since $w(1)\in [\e, \fg]$ from \er{neg basis}, assume 
\begin{equation}\label{wv}
w(1)=[\e,v_1].
\end{equation} 
Then $v_1\in \bigoplus_{k\leq -1}\fg_k$. 
With $\tilde W=(w(2),\cdots,w(\beta))$, 
Lemma \ref{int by parts} gives
\begin{equation}\label{neg induct}
\begin{split}
\la \p_W\p_\e^b\p_p^{a}\p_U,I\ra&=\la \p_{[\e,v_1]}\p_{\tilde W}\p_\e^b\p_p^{a}\p_U,I\ra\\
&=\frac{1}{b+1}\sum_{m=2}^\beta \la \p_{w(2)}\cdots\p_{[v_1,w(m)]}\cdots\p_{w(\beta)}\p_\e^{b+1}\p_p^{a}\p_U,I\ra\\
&\quad\ +\frac{a}{b+1}\la \p_{\tilde W}\p_\e^{b+1}\p_p^{a-1}\p_{[v_1,p]}\p_U,I\ra\\
&\quad\ +\frac{1}{b+1}\sum_{n=1}^c \la \p_{\tilde W}\p_\e^{b+1}\p_p^{a}\p_{u(1)}\cdots\p_{[v_1,u(n)]}\cdots\p_{u(c)},I\ra.
\end{split}
\end{equation}
Here all the summands in the first sum can be expressed by \er{mmg} with $\beta-1$ elements from $\cw$ in \er{neg basis}. 
The summands in the second and the third sums can either be expressed by \er{mmg} with $\beta-1$ elements from $\cw$ if the heights of $[v_1,p]$ or $[v_1,u(n)]$ are $\geq -1$ by Lemma \ref{dwp and dwep}, or with $\beta$ elements from $\cw$ otherwise. But the new $-o([v_1,p])$ or $-o([v_1,u(n)])$ is strictly less than the old $-o(w(1))$, since the height of $v_1$ is one bigger than that of $w(1)$ in view of \er{wv}, and $p$ and $u(n)$ have nonnegative heights. Therefore the new total $-o(W)$ is strictly less than the old one. 

Therefore through this hierarchy of induction hypothesis, all the terms on the right are known. 

We note that the outcome of \er{neg induct} does not depend on the choice of $v_1$ in \er{wv}, which may not be unique. Say $v_1'=v_1+v_0$ with $[\e,v_0]=0$. Then the outcome of \er{neg induct} is linear in $v_1$ and $v_0$, and the terms for $v_0$ combine to give $\la \p_{[\e,v_0]}\p_{\tilde W}\p_\e^b\p_p^{a}\p_U,I\ra=0$ by tracing the identity backward. 
\end{proof}

After all these coefficients in \er{mg} and \er{mmg}, as functions on $p\in \fh$, are calculated, we can assembel our function as follows. Let 
$$x=\sum_{w_i\in \cw} {z_i} w_i + \epsilon + p + \sum_{u_j\in \cu} {y_j} u_j$$ 
be an element in $\fg$ with the $w_i$ from $\cw$ in \er{neg basis}, the $u_j$ from $\cu$ in \er{basis}, $p\in \fh$, and the $z_i$ and $y_j$ as coefficients. Then we get $I(x)$ by \er{Taylor}, the multinomial theorem, and the coefficients \er{mg} and \er{mmg}. 

If we change the basis back to the usual root vectors, then we get $I(x)$ for $x=p+\e+\sum_{o(\a)\neq -1} x_\a e_\a$. Using Lemma \ref{use fh}, we can 
further spell out the dependence on the $e_{-\a_i}$. At the end, we obtain the function $I(x)$ expressed in the coordinates of a general element in $\fg$:
$$x=\sum_{i=1}^l p_i H_{\a_i} + \sum_{\a\in \Delta} x_\a e_\a.$$

\section{Implementation of the algorithm on Maple}

It turns out that our algorithm is very ready for implementation on Maple, especially using the {\tt LieAlgebras} package under Maple written by Prof. Ian Anderson. One particularly useful feature is that we can do the change of basis in Lemma \ref{base change} easily. This author has written a Maple program containing all the implementations. Together with his collaborator, the author plans to apply his Maple implementation of this algorithm to the invariant functions on the Lie algebras of type $E$ and to make the results available online at the DifferentialGeometry Software Project website at the Digital Commons of the Utah State Univeristy (http://digitalcommons.usu.edu/dg/). 

In this section, we illustrate our Maple implementation using the degree 6 invariant function on $\fg_2$ for concreteness. We will also comment on the running time for other bigger examples. 

We use the basis of $\fg_2$ as made explicit in the Appendix of \cite{W-sym}. We setup our $\fg_2$ with basis 
\begin{align*}
&e_1= H_{\a_1} &&e_2=H_{\a_2} && \\
&e_3= e_{\a_1} && e_4=e_{\a_2} && e_5=e_{\a_1+\a_2}\\
&e_6=e_{2\a_1+\a_2} && e_7=e_{3\a_1+\a_2} && e_8=e_{3\a_1+2\a_2}\\
&e_9= e_{-\a_1} && e_{10}=e_{-\a_2} && e_{11}=e_{-\a_1-\a_2}\\
&e_{12}=e_{-2\a_1-\a_2} && e_{13}=e_{-3\a_1-\a_2} && e_{14}=e_{-3\a_1-2\a_2}
\end{align*}

We choose our slice elements in \er{split} to be $s_1=e_4$ and $s_2=e_8$. Let $\e=e_9+e_{10}$, and we do the change of basis in Lemma \ref{base change}. Order the new basis according to the heights following the above pattern, and denote them by $\{f_i\}_{i=1}^{14}$. We also record where we have the relation $u=[\epsilon, v]$ in \er{ujk} in a table.  

The degree $d$ in this example is set to be 6. A nonzero term from \er{mmg}, for example, 
\begin{equation}\label{record}
\la \p_{f_{11}}^2\p_{f_8}\p_\e \p_p^2, I\ra\text{ is recorded by }y_{11}^2 y_8\text{ with }b=1\text{ and }a=2.
\end{equation}
At some point of the program, this derivative function is calculated as $3136\,( 3p_2-p_2)( 3p_1-2p_2)$. We record such information in the table ${\tt valuedata}$ as 
$$
{\tt valuedata}[y_{11}^2 y_8] = 3136\,( 3p_2-p_2)( 3p_1-2p_2).
$$

We generate the possible nonzero terms according to the condition \er{second non van}, and we can order them according to our induction order. There are 18 terms of the form \er{mg} with no $\p_W$, 
and 535 terms of the form \er{mmg} in general. 

To start the induction, we input the first few terms \er{use fctl}, \er{more van 1}, \er{more van 2}, and we choose to input also \er{just put in}. 

Then we compute the other terms of the form \er{mg} by formulas \er{quicker} when $a=0$ and \er{a>0} when $a>0$. We also calculate the purely Cartan term by \er{a=d}. 

Finally, and this is the big step, we compute the terms of the form \er{mmg} by formula \er{neg induct}, incorporating the two formulas \er{dwp} and \er{dwep} when $H_{\a_i}$ or $e_{-\a_i}$ appears. 

Using the procedure described at the end of Section 3, we get a function for $x\in \fg_2$ in the original root vector basis. It turns out to be one quarter of the 
sum of principal minors of dimension 6 of 
the corresponding matrix representation of $\fg_2$. The whole procedure takes about 8 seconds on a usual laptop. 

This author has tried his Maple program for other bigger invariant functions. For the Pfaffian of degree 5 on $D_5$, there are 51 terms of the form \er{mg} and 34366 terms of the form \er{mmg}. The whole calculation takes about one hour on a usual laptop. The author has also calculated the Pfaffian using a simple implementation of the definition and that calculation actually took slightly longer than one hour. The two results of course exactly match (up to a sign). 

The author has also tried his program for the second invariant function of degree 5 on $E_6$. He obtained the structure constants of $E_6$ following \cite{vavilov}. There are 72 terms of the form \er{mg} and 452056 terms of the form \er{mmg}. 
The author estimates that it will take about one day on a usual laptop to fully calculate the invariant function of degree 5 on $E_6$. 
He would like to remark that the program is very stable while running through the possible terms, and calculations of such magnitude should be carried out on a more powerful station or using a more efficient programming language. Furthermore this author's programming skill is rather limited, and very likely there is room for considerable improvement in terms of the programming. 

The author plans to further improve his Maple program with the help of Prof. I. Anderson. Then we will make the program and the explicit formulas obtained available to the public online. As an interesting application, the author plans to apply these concrete formulas to obtain the first integrals of the full Toda flow on the $E$'s as studied in \cite{GS}. 

\begin{bibdiv}
\begin{biblist}

\bib{W-sym}{article}{
   author={Balog, J.},
   author={Feh{\'e}r, L.},
   author={O'Raifeartaigh, L.},
   author={Forg{\'a}cs, P.},
   author={Wipf, A.},
   title={Toda theory and $\scr W$-algebra from a gauged WZNW point of view},
   journal={Ann. Physics},
   volume={203},
   date={1990},
   number={1},
   pages={76--136},
   issn={0003-4916},
}

\bib{ChB}{article}{
   author={Chevalley, C.},
   title={The Betti numbers of the exceptional simple Lie groups},
   conference={
      title={Proceedings of the International Congress of Mathematicians,
      Cambridge, Mass., 1950, vol. 2},
   },
   book={
      publisher={Amer. Math. Soc.},
      place={Providence, R. I.},
   },
   date={1952},
   pages={21--24},
}

\bib{8E8}{article}{
   author={Cederwall, Martin},
   author={Palmkvist, Jakob},
   title={The octic $E_8$ invariant},
   journal={J. Math. Phys.},
   volume={48},
   date={2007},
   number={7},
   pages={073505, 7pp},
   issn={0022-2488},
}

\bib{C}{article}{
   author={Chevalley, Claude},
   title={Invariants of finite groups generated by reflections},
   journal={Amer. J. Math.},
   volume={77},
   date={1955},
   pages={778--782},
   issn={0002-9327},
}

\bib{GS}{article}{
   author={Gekhtman, M. I.},
   author={Shapiro, M. Z.},
   title={Noncommutative and commutative integrability of generic Toda flows
   in simple Lie algebras},
   journal={Comm. Pure Appl. Math.},
   volume={52},
   date={1999},
   number={1},
   pages={53--84},
   issn={0010-3640},
}

\bib{K1}{article}{
   author={Kostant, Bertram},
   title={The principal three-dimensional subgroup and the Betti numbers of
   a complex simple Lie group},
   journal={Amer. J. Math.},
   volume={81},
   date={1959},
   pages={973--1032},
   issn={0002-9327},
}

\bib{K2}{article}{
   author={Kostant, Bertram},
   title={Lie group representations on polynomial rings},
   journal={Amer. J. Math.},
   volume={85},
   date={1963},
   pages={327--404},
   issn={0002-9327},
}

\bib{K3}{article}{
   author={Kostant, Bertram},
   title={On Whittaker vectors and representation theory},
   journal={Invent. Math.},
   volume={48},
   date={1978},
   number={2},
   pages={101--184},
   issn={0020-9910},
}

\bib{LN}{article}{
author={Li, Luen-Chau},
  author = {Nie, Zhaohu},
  title = {Liouville integrability of a class of integrable spin Calogero-Moser systems and exponents of simple Lie algebras}, 
  journal={
  Communications in Mathematical Physics},
   volume={308},
   date={2011},
   number={2},
   pages={415-438},
}

\bib{vavilov}{article}{
   author={Vavilov, N. A.},
   title={Do it yourself structure constants for Lie algebras of types $E_l$},
   language={English translation},
      journal={J. Math. Sci. (N. Y.)},
      volume={120},
      date={2004},
      number={4},
      pages={1513--1548},
      issn={1072-3374},
}

\end{biblist}
\end{bibdiv}

\bigskip
\end{document}